\begin{document}

\definecolor{uuuuuu}{rgb}{0.26666666666666666,0.26666666666666666,0.26666666666666666}
\definecolor{xdxdff}{rgb}{0.49019607843137253,0.49019607843137253,1}
\definecolor{qqttcc}{rgb}{0,0.2,0.8}

\title{A Group Theory Proof of Pascal's Theorem}

\author{Kaylee Wiese}

\date{}

\newcommand{\X}{X} 
\newcommand{\id}{o} 
\newcommand{\Lref}{L} 
\newcommand{\XL}{X_L} 

\newcommand{\aP}{a} 
\newcommand{\bP}{b} 
\newcommand{\cP}{c} 
\newcommand{\dP}{d} 
\newcommand{\eP}{e} 
\newcommand{\fP}{f} 

\newcommand{\pref}{p} 
\newcommand{\newPt}{g} 

\newcommand{\Pp}{A_1} 
\newcommand{\PP}{A_2} 
\newcommand{\Q}{B_1} 
\newcommand{\QQ}{B_2} 
\newcommand{\R}{C_1} 
\newcommand{\RR}{C_2} 

\newcommand{\Ppt}{p} 
\newcommand{\Qpt}{q} 
\newcommand{\Rpt}{r} 

\newcommand{\ALine}{P} 
\newcommand{\BLine}{Q} 
\newcommand{\CLine}{R} 

\newcommand{\T}{T} 

\newcommand{\Par}{\mathcal{P}}
\newcommand{\Hyp}{H}
\newcommand{\Circ}{S^1}
\newcommand{\Linfty}{L_{\infty}}

\newcommand{\mycomment}[1]{}

\maketitle

\begin{abstract}
It will be shown that Pascal's Theorem is equivalent to the associativity of a natural binary operation on conic sections. A novel proof for Pascal's Theorem will then be given by showing that this binary operation is associative independent of Pascals Theorem. Specifically, this operation is equivalent to either addition of real numbers, multiplication of real numbers, or rotations on the plane depending on the type of the conic. Since each of these is already known to be associative, it will follow that the binary operation is associative and this will prove Pascal's Theorem.
\end{abstract}

\section{Introduction}
Pascal's Theorem is an important result in planar geometry and has attracted many proofs. These range from pure geometry, applications of Bezout's Theorem, and hyperbolic geometry \hyperlink{https://www.heldermann-verlag.de/jgg/jgg01_05/jgg0101.pdf}{\citep{Author3Year3}}, \hyperlink{https://www.jstor.org/stable/pdf/2315885.pdf}{\citep{Author4Year4}}, \hyperlink{https://arxiv.org/pdf/2012.14883.pdf}{\citep{Author5Year5}}, \hyperlink{https://arxiv.org/pdf/1903.00460.pdf}{\citep{Author6Year6}}, \hyperlink{https://www.jstor.org/stable/pdf/2688012.pdf}{\citep{Author7Year7}}. In this article we will continue this tradition by providing a proof of Pascal's Theorem based on group theory and the arithmetic properties of elementary operations. Pascal's Theorem is stated as follows:

\begin{theorem} \textbf{(Pascal's Theorem)}
Let $\X$ be a conic section and let $\aP,\bP,\cP,\dP,\eP,\fP$ be six points on $\X$. Connect consecutive pairs of points on this list with the lines 
\begin{eqnarray*}
\Pp = \overline{\aP\bP} && \PP=\overline{\dP\eP}\\
\Q = \overline{\bP\cP} && \QQ=\overline{\eP\fP}\\
\R = \overline{\cP\dP} && \RR = \overline{\fP\aP}
\end{eqnarray*}
Then the points of intersection, $\Pp\cap \PP$, $\Q\cap \QQ$, $\R\cap \RR$ are colinear on the projective plane.
\end{theorem}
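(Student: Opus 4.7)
The plan is to follow the strategy announced in the abstract: encode Pascal's Theorem as the associativity of a binary operation on the conic $\X$, and then verify associativity of that operation independently. First I would fix a reference point $\id \in \X$ and a reference line $\Lref$, and use them to define a binary operation $*$ on $\X$ as follows. Given two points $\Ppt,\Qpt \in \X$, draw the chord $\overline{\Ppt\Qpt}$; it meets $\X$ in one further point. Transport that point back to $\X$ via a second chord whose direction is fixed by $\Lref$ (for instance, parallel to $\Lref$, or passing through $\Lref \cap \X$), yielding $\Ppt * \Qpt$. The reference data is chosen so that $\id$ is a two-sided identity.

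Next I would show that for any six points $\aP,\bP,\cP,\dP,\eP,\fP$ on $\X$ the equation $(\aP * \bP) * \cP = \aP * (\bP * \cP)$ is precisely the statement that the three Pascal intersections $\Pp \cap \PP$, $\Q \cap \QQ$, $\R \cap \RR$ associated to the hexagon $\aP\bP\cP\dP\eP\fP$ are collinear. This step is essentially bookkeeping: each of the three diagonals in the Pascal configuration records the ``second intersection'' of a chord with $\X$, and collinearity of the three auxiliary points is exactly the condition that the two parenthesizations of $\aP * \bP * \cP$ produce the same element of $\X$. Under a suitable assignment of roles (with, say, $\dP$ playing the role of $\id$, and $\Lref$ tying the remaining vertices to the partial products), Pascal's Theorem for an arbitrary hexagon becomes equivalent to associativity of $*$.

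The remaining task is to prove associativity of $*$ on each type of conic without invoking Pascal's Theorem. I would split into three cases, exhibiting in each an explicit bijection from $\X$ onto a standard model carrying a well-known associative operation. For the parabola $\Par$, parametrize by $t \mapsto (t,t^2)$ and verify that $*$ corresponds to ordinary addition; for the hyperbola $\Hyp$, parametrize by $t \mapsto (t,1/t)$ and verify that $*$ corresponds to ordinary multiplication; for the ellipse, parametrize by angle so that $\X \cong \Circ$ and verify that $*$ corresponds to addition of angles, i.e.\ rotation. In each case the verification reduces to the observation that three points on the conic are collinear iff their parameters satisfy a polynomial relation whose form matches $+$, $\times$, or angle addition, respectively; combining this with the construction of $*$ makes the isomorphism transparent, and associativity then follows from the known associativity of the model operation.

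The main obstacle I anticipate is the equivalence in the second paragraph: one must set up the six Pascal points and the reference data so that the hexagon literally realizes the two parenthesizations of a triple product, and one must do so uniformly across conic types and uniformly with respect to the position of $\Lref$ (including the cases where $\Lref$ passes through points at infinity, i.e.\ $\Lref = \Linfty$). Getting this geometric-to-algebraic dictionary right is the crux of the argument; once it is pinned down, the arithmetic verification in the third paragraph is routine, since the associativity of $+$, $\times$, and rotation on $\Circ$ is already in hand.
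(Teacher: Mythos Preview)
Your overall architecture matches the paper's: define a binary operation on $\X$ from a reference point and a reference line, show Pascal is equivalent to its associativity, and then verify associativity by reducing to the three standard models (addition on $\Par$, multiplication on $\Hyp$, rotation on $\Circ$). But two concrete points in your outline do not hold up. First, the operation as you describe it is not well-defined: the chord $\overline{\Ppt\Qpt}$ through two points of a conic meets $\X$ in exactly those two points, not in ``one further point'' (conics have degree two, not three). The correct construction, and the one the paper uses, is that $\overline{\Ppt\Qpt}$ meets the reference \emph{line} $\Lref$ in a point $\pref$, and then the line $\overline{\id\pref}$ meets $\X$ in a second point, which is $\Ppt\oplus\Qpt$. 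Neither of your suggested variants (``parallel to $\Lref$'' or ``passing through $\Lref\cap\X$'') is this; in particular $\Lref$ need not meet $\X$ at all.

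Second, and more seriously, you leave the crux unresolved: how does an \emph{arbitrary} Pascal hexagon become an associativity instance? You cannot fix $\Lref$ and $\id$ in advance and hope a generic hexagon encodes a triple product; it will not. The paper's key move is to let the hexagon dictate the reference data: take $\Lref$ to be the line through the two intersection points $\Pp\cap\PP$ and $\Q\cap\QQ$, and take $\id=\eP$. With this choice one automatically has $\dP=\aP\oplus\bP$ and $\fP=\bP\oplus\cP$, so associativity $(\aP\oplus\bP)\oplus\cP=\aP\oplus(\bP\oplus\cP)$ forces $\R\cap\RR$ onto $\Lref$, which is exactly Pascal. A further wrinkle you do not anticipate: one must ensure $\eP\notin\Lref$ for the marked conic to make sense, and the paper handles this by a cycling argument on the six labels. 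Your third step (the three model computations, transported by projective transformations) is fine and agrees with the paper.
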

In this theorem we can assume that some of the points are identical, drawing tangent lines when necessary, as long as the resulting pairs of lines, such as $\Pp$ and $\PP$, are not identical. The main idea of the group theory proof comes from the fact that a conic section with a marked point has a natural group structure \hyperlink{https://arxiv.org/abs/math/0311306}{\citep{AuthorYear}}. This group operation is purely geometric and can be thought of as a conic version of the elliptic curve group operation. On the affine plane the group structure structure is given as follows:

\vspace{.25cm}\noindent\textbf{Binary Operation on $\X$:}  Let $\X$ be a conic, $\id$ a fixed point on $\X$, and $\aP,\bP$  two points on $\X$. Draw the line parallel to $\overline{\aP\bP}$ which passes through $\id$. This line will intersect $\X$ at two points including $\id$, call the second point $\aP\oplus \bP$. 
\vspace{.25cm}

If $\aP=\bP$, we understand $\overline{\aP\bP}$ to be the line at this point which is tangent to the curve. If the resulting line is tangent at $\id$, then we take $\aP\oplus \bP = \id$. This binary operation turns $\X$ into a group with identity $\id$. This group has been studied as a way to understand elliptic curves in the context of the Birch and Swinnerton-Dyer Conjecture \hyperlink{https://arxiv.org/abs/math/0311306}{\citep{AuthorYear}}, and its properties have been explored in depth as well \hyperlink{https://www.jstor.org/stable/pdf/40378668.pdf}{ \citep{Author2Year2}}. The main observation of this article is that Pascal's Theorem is required to prove the associativity of this operation. A natural question to ask is if this implication works in reverse: Does Pascal's Theorem follow from the associativity of this binary operation? If so, can we prove associativity independent of Pascal's theorem in order to obtain a new proof of Pascal's Theorem?

In this paper, we will answer this question with an affirmative. In the usual proof of associativity for this operation the projective case of Pascal's Theorem is used and the line produced by Pascal's Theorem is the line at infinity, regardless of the conic we work with. This is because of the use of parallel lines in the definition of the conic group operation. This does not cover all cases of Pascal's Theorem, so we will need to generalize our construction. This is done in the first section and with this we can prove that Pascal's Theorem is equivalent to the associativity of this binary operation on any conic. In the following section this binary operation will be computed explicitly in a few special cases, resulting in real number addition, real number multiplication, and planar rotation, each of which is already known to be associative. Transformations of the projective plane will provide isomorphisms between any conic and one of these special cases. Associativity is then proved independent of Pascal's Theorem through these isomorphisms, which then provides a proof of Pascal's Theorem itself.
\section{Conic Addition and Pascal's Theorem}

In this section we generalize the binary operation and expand on the results of \hyperlink{https://www.jstor.org/stable/pdf/40378668.pdf}{ \citep{Author2Year2}}. The main difference is that throughout the article we will be working on the real projective plane rather than the affine plane.  In the sequel, if $\aP,\bP$ are two points on a conic and $\aP=\bP$, then we assume that the line $\overline{\aP\bP}$ is the tangent line of the conic at this point.

Let $\X$ be a conic and $\Lref$ a line on the real projective plane, and let $\id$ be a point on $\X$ which is not on $\Lref$. We will call such a triple, $(\X,\Lref,\id)$ a \textbf{Marked Conic}. Each marked conic gives rise to a natural group structure on $\XL:=X-L$ as follows (see Figure 1 for an illustration).

\vspace{.25cm}\noindent\textbf{Binary Operation on $\XL$:} Let $\aP,\bP\in \XL$ and let $\overline{\aP\bP}$ be the line through $\aP$ and $\bP$.  This line will intersect $\Lref$ at some point $\pref$ on the projective plane not on $X$. The line $\overline{\id\pref}$ will intersect the conic twice, at least once at $\id$ and then another point. Define  $\aP\oplus \bP$ to be this second point.
\vspace{.25cm}

\noindent It should be noted that if $\Lref$ is the line at infinity, then this operation reproduces the operation originally described in the introduction.

\begin{figure}[!h]
\centering
\scalebox{0.55}{
\begin{tikzpicture}[line cap=round,line join=round,>=triangle 45,x=1cm,y=1cm]
\clip(-8.56928333823871,-7.257432724477938) rectangle (9.410011990495365,4.723067299706999);
\draw [rotate around={-32.650802296595565:(-2.31,-1.81)},line width=2pt] (-2.31,-1.81) ellipse (4.4906330669392664cm and 2.587911386019304cm);
\draw [line width=2pt,domain=-8.56928333823871:9.410011990495365] plot(\x,{(--12.8484-4.8*\x)/1.08});
\draw [line width=0.8pt,domain=-8.56928333823871:9.410011990495365] plot(\x,{(--4.61151686043069--1.4170040411949352*\x)/5.0308424088331325});
\draw [line width=0.8pt,domain=-8.56928333823871:9.410011990495365] plot(\x,{(--11.685368592697273-6.616605331960558*\x)/-2.3467314496867435});
\draw (-6.826043677087688,-0.3357458346136021) node[anchor=north west] {$\aP$};
\draw (-1.288694165196205,1.168225637751982) node[anchor=north west] {$\bP$};
\draw (1.6167052700554991,-1.4637244388877901) node[anchor=north west] {$\aP\oplus\bP$};
\draw (0.12982438260315643,-5.274924874541486) node[anchor=north west] {$\id$};
\begin{scriptsize}
\draw[color=black] (-2.7755750526485476,1.6211261379529818) node {$\X$};
\draw[color=black] (1.4628900058362913,4.253076214592753) node {$\Lref$};
\draw [fill=xdxdff] (-6.2714319164077414,-0.8497836270306787) circle (3.5pt);
\draw [fill=xdxdff] (-1.2405895075746085,0.5672204141642565) circle (3.5pt);
\draw[color=black] (-7.902750526622143,-1.5919038257371296) node {$\overline{\aP\bP}$};
\draw [fill=uuuuuu] (2.3232685439939127,1.571028693360387) circle (2pt);
\draw[color=uuuuuu] (2.539596855370746,1.4160391189940384) node {$\pref$};
\draw [fill=black] (-0.023462905692830827,-5.045576638600171) circle (3.5pt);
\draw[color=black] (3.5821225350787107,3.8087210068483763) node {$\overline{\pref\id}$};
\draw [fill=xdxdff] (1.1733169991261256,-1.6712577319926922) circle (3.5pt);
\end{scriptsize}
\end{tikzpicture}
}
\caption{Addition Rule on Marked Conics}
\end{figure}
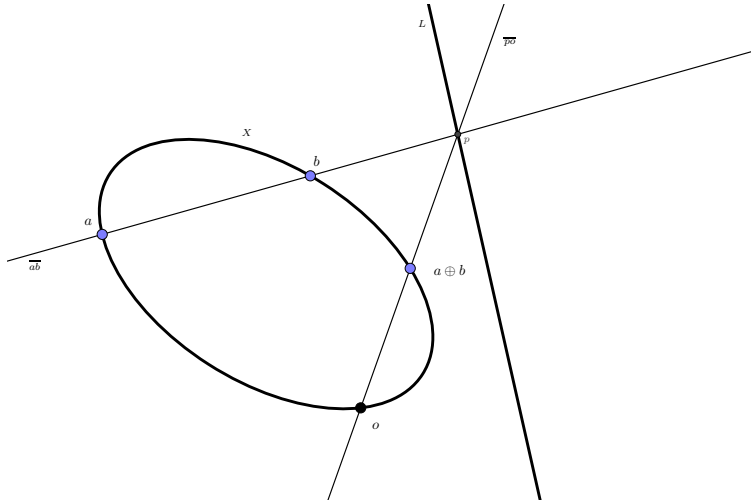

\begin{prop} The binary operation on $\XL$ induced by the marked conic $(\X,\Lref,\id)$ is well-defined, commutative, $\id$ is an identity element, and inverses exist.
\end{prop}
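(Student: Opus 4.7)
I will verify the four properties in turn, relying only on: (i) two distinct lines in the projective plane meet in a unique point, (ii) a line meets a conic in at most two points (counted with multiplicity), and (iii) the tangent conventions (the line $\overline{\aP\aP}$ denotes the tangent at $\aP$, and $\aP \oplus \bP := \id$ when $\overline{\id\pref}$ is tangent to $\X$ at $\id$). For well-definedness, given $\aP,\bP \in \XL$, both points lie on $\overline{\aP\bP}$ but not on $\Lref$, so $\overline{\aP\bP} \neq \Lref$ and $\pref := \overline{\aP\bP} \cap \Lref$ is a single point. Since $\overline{\aP\bP}$ meets $\X$ only at $\aP$ and $\bP$, and $\pref \neq \aP, \bP$ (because $\aP,\bP \notin \Lref$), we have $\pref \notin \X$; in particular $\pref \neq \id$, so $\overline{\id\pref}$ is well-defined and meets $\X$ in $\id$ together with a unique second point, which we call $\aP \oplus \bP$. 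This second point cannot lie on $\Lref$: otherwise $\overline{\id\pref}$ would contain two distinct points of $\Lref$, forcing $\overline{\id\pref} = \Lref$ and hence $\id \in \Lref$, contradicting the definition of a marked conic.

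\textbf{Commutativity and identity.} Commutativity is immediate from $\overline{\aP\bP} = \overline{\bP\aP}$, which makes the construction symmetric in $\aP$ and $\bP$. For the identity, take $\aP \in \XL$ with $\aP \neq \id$; then $\overline{\id\aP}$ meets $\Lref$ at some $\pref$, and since $\id, \pref$ both lie on $\overline{\id\aP}$ we have $\overline{\id\pref} = \overline{\id\aP}$, which meets $\X$ precisely in $\{\id,\aP\}$, giving $\id \oplus \aP = \aP$. The case $\aP = \id$ is handled by the tangent convention applied to $\overline{\id\id}$, yielding $\id \oplus \id = \id$.

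\textbf{Inverses and main obstacle.} For inverses, let $t$ be the tangent to $\X$ at $\id$ and set $\pref := t \cap \Lref$ (so $\pref \neq \id$, and $\pref \notin \X$ because $t$ meets $\X$ only at $\id$). Given $\aP \in \XL$, the line $\overline{\aP\pref}$ meets $\X$ in $\aP$ together with a second point $\bP$; the well-definedness argument shows $\bP \in \XL$. By construction $\overline{\aP\bP}$ passes through $\pref$ and $\overline{\id\pref} = t$ is tangent at $\id$, so the tangent convention gives $\aP \oplus \bP = \id$. The main nuisance throughout is bookkeeping around the tangent and multiplicity conventions --- namely the sub-cases $\aP = \bP$, $\overline{\id\pref}$ tangent at $\id$, and $\overline{\aP\pref}$ tangent at $\aP$ in the inverse step --- but with the stated conventions each argument becomes a single uniform case, and the recurring check that constructed points lie in $\XL$ always reduces to the assumption $\id \notin \Lref$.
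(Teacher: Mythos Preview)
Your proof is correct and follows essentially the same approach as the paper's: the same line--line and line--conic intersection reasoning for well-definedness, the same one-line commutativity and identity arguments, and the identical inverse construction via the tangent at $\id$ meeting $\Lref$. You are in fact slightly more careful than the paper, adding the checks that $\pref \neq \id$ and that $\aP \oplus \bP$ lands in $\XL$ rather than on $\Lref$, and handling the tangent sub-cases explicitly; the paper leaves these implicit.
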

\begin{proof}
We must first show that this operation is well-defined. For each $\aP,\bP\in \XL$, the line $\overline{\aP\bP}$ clearly exists. This line cannot be identical to $\Lref$, since $\aP$ and $\bP$ are not on $\Lref$. Furthermore, the point where $\Lref$ and $\overline{\aP\bP}$ intersect cannot be on $\X$ because $\overline{\aP\bP}$ can only intersect $\X$ twice, and it already intersects at $\aP$ and $\bP$ which are not on $\Lref$. So the point $\pref$, intersecting $\Lref$ and $\overline{\aP\bP}$, exists and is not on $\X$. The rest of the construction is clear.

The operation is commutative since we have $\overline{\aP\bP}=\overline{\bP\aP}$. The identity is $\id$ since if $\bP=\id$, then we will get $\overline{\aP\bP}=\overline{\aP\id}=\overline{\id\pref}$. Finally, to find inverses, first find the line tangent to $\X$ at $\id$, and let $\pref_{\id}$ be the intersection point of this line with $\Lref$. To get the inverse of $\aP$, construct the line $\overline{\aP\pref_{\id}}$, and the inverse will be the second point of intersection of this line with $\X$.
\end{proof}

The main result relies on the following equivalence

\begin{theorem} Pascal's Theorem is equivalent to the statement that for every marked conic $(\X,\Lref,\id)$, the induced binary operation on $\XL$ is associative
\end{theorem}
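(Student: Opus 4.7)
The plan is to prove both directions by exhibiting the appropriate hexagon on $\X$. Underlying both is the basic fact that, for the marked conic operation, $x \oplus y = x' \oplus y'$ precisely when $\overline{xy}$ and $\overline{x'y'}$ meet $\Lref$ in the same point: both sides are, by definition, the second intersection with $\X$ of the line joining $\id$ to that common point.

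\textbf{Pascal $\Rightarrow$ associativity.} Fix a marked conic $(\X, \Lref, \id)$ and points $\aP, \bP, \cP \in \XL$; set $u = \aP \oplus \bP$ and $v = \bP \oplus \cP$, and aim for $u \oplus \cP = \aP \oplus v$. By construction, $\overline{\aP\bP}$ and $\overline{\id u}$ share a point $\pref_1 \in \Lref$, while $\overline{\bP\cP}$ and $\overline{\id v}$ share a point $\pref_2 \in \Lref$. I would then apply Pascal's Theorem to the hexagon $\aP, \bP, \cP, u, \id, v$ inscribed in $\X$. Its three pairs of opposite sides are $(\overline{\aP\bP}, \overline{u\id})$, $(\overline{\bP\cP}, \overline{\id v})$, and $(\overline{\cP u}, \overline{v\aP})$; the first two intersections are $\pref_1, \pref_2 \in \Lref$, so Pascal forces the third intersection onto $\overline{\pref_1\pref_2} = \Lref$. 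Thus $\overline{\cP u}$ and $\overline{v\aP}$ meet $\Lref$ at the same point, and the basic fact gives $u \oplus \cP = \aP \oplus v$.

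\textbf{Associativity $\Rightarrow$ Pascal.} Given six points $\aP,\ldots,\fP$ on $\X$, write $\ALine := \Pp\cap\PP$, $\BLine := \Q\cap\QQ$, $\CLine := \R\cap\RR$ for the three Pascal intersection points. Assume $\ALine \neq \BLine$ (otherwise the three points are trivially collinear), set $\Lref := \overline{\ALine\BLine}$, and choose any $\id \in \X \setminus \Lref$, which exists since $|\X\cap\Lref|\le 2$. Then $\overline{\aP\bP}$ and $\overline{\dP\eP}$ both meet $\Lref$ at $\ALine$, so the basic fact gives $\aP\oplus\bP = \dP\oplus\eP$; similarly $\bP\oplus\cP = \eP\oplus\fP$. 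Granting associativity, $(\XL,\oplus)$ is an abelian group, so these two equalities combine to
\[
\aP \oplus \bP \oplus \eP \oplus \fP = \dP \oplus \eP \oplus \bP \oplus \cP,
\]
and cancellation of $\bP$ and $\eP$ yields $\aP\oplus\fP = \cP\oplus\dP$. Applying the basic fact once more, $\overline{\aP\fP}$ and $\overline{\cP\dP}$ meet $\Lref$ at a common point; this point lies on $\overline{\aP\fP} \cap \overline{\cP\dP} = \CLine$, so $\CLine \in \Lref$.

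\textbf{Anticipated difficulty.} The algebraic heart of the proof is short, and the twin choices --- the hexagon $\aP,\bP,\cP,u,\id,v$ on one side and the Pascal line $\Lref := \overline{\ALine\BLine}$ on the other --- feel natural once one reads $\oplus$ as encoding a ``collinearity through $\Lref$'' relation. The bulk of the remaining work will be handling degenerate configurations: coincident vertices (where $\overline{xx}$ is interpreted as the tangent per the section preamble), the edge case $\ALine = \BLine$, arranging $\id$ to be distinct from the active points and off $\Lref$, intermediate sums $u, v$ accidentally landing on $\Lref$, and line coincidences such as $\overline{\aP\bP} = \overline{u\id}$ that would collapse the hexagon. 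I expect these to yield to a generic-position argument in the real projective plane supplemented by brief case analysis, rather than to any deeper idea.
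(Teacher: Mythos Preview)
Your forward direction is essentially the paper's argument: the same hexagon $\aP,\bP,\cP,\aP\oplus\bP,\id,\bP\oplus\cP$, the same observation that two of the three Pascal intersections lie on $\Lref$ by construction, forcing the third there as well.

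Your reverse direction, however, takes a genuinely different route. The paper insists on taking the identity $\id$ to be one of the six given vertices, namely $\eP$. This makes the relations collapse to $\dP=\aP\oplus\bP$ and $\fP=\bP\oplus\cP$, so a \emph{single} application of associativity gives $\dP\oplus\cP=\aP\oplus\fP$ directly. The cost is that the paper must then argue, via a somewhat lengthy cycling argument on the six vertices, that one can always arrange the vertex playing the role of $\eP$ to lie off $\Lref$. You instead let $\id$ be an arbitrary point of $\X\setminus\Lref$ (trivially available since $|\X\cap\Lref|\le 2$), which sidesteps the cycling argument entirely, but now the relations read $\aP\oplus\bP=\dP\oplus\eP$ and $\bP\oplus\cP=\eP\oplus\fP$, and you must invoke commutativity and cancellation in the abelian group to reach $\aP\oplus\fP=\cP\oplus\dP$. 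Both approaches are valid; yours trades the paper's combinatorial detour for a slightly heavier use of the group axioms, and is arguably cleaner overall. One caution: with an arbitrary $\id$ you now need all six of $\aP,\ldots,\fP$ to lie in $\XL$ for the sums to be defined, whereas the paper (with $\id=\eP$) only needs this for $\aP,\bP,\cP$. Since $\Lref$ meets $\X$ in at most two points this is generically fine, but it is one more item for your list of degenerate configurations to dispatch.
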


\begin{proof}
\noindent\textbf{Pascal's Theorem implies Associativity:} Let $\aP,\bP,\cP$ be three points on $\XL$. Note that if $\bP=\id$ then $\aP\oplus(\id\oplus \cP)=(\aP\oplus \id)\oplus \cP$ by direct computation, so we can assume $\bP\ne \id$ We can then get a list of six points on $\X$:
$$
\aP,\;\; \bP, \;\; \cP, \;\; \aP\oplus \bP, \;\; \id, \;\; \bP\oplus \cP
$$
And we have the six lines
\begin{eqnarray*}
\Pp = \overline{\aP\bP} && \PP=\overline{(\aP\oplus \bP)\id}\\
\Q = \overline{\bP\cP} && \QQ=\overline{\id(\bP\oplus \cP)}\\
\R = \overline{\cP(\aP\oplus \bP)} && \RR = \overline{(\bP\oplus \cP)\aP}
\end{eqnarray*}
Since $\bP\ne\id$, the pairs of lines such as $\Pp$ and $\PP$ are distinct. Assume that $\R$ and $\RR$ intersect $\Lref$ at the same point, $\pref$. Then second line in the construction of both $(\aP\oplus \bP)\oplus \cP$ and $\aP\oplus(\bP\oplus \cP)$ will be $\overline{\id\pref}$. Since they both share this line in common, these resulting points will be the same and associativity will follow.

We then just need to show that $\R$ and $\RR$ intersect $\Lref$ at the same point. By Pascal's Theorem the intersection points $\Pp\cap \PP$, $\Q\cap \QQ$, and $\R\cap \RR$ are all colinear. But $\Pp$ and $\PP$ are the two lines created in order to construct $\aP\oplus \bP$, and so they intersect on $\Lref$. Similarly, $\Q$ and $\QQ$ also intersect on $\Lref$. It then follows that the colinear points given by Pascal's Theorem are all on $\Lref$ and so $\R\cap \RR$ is on $\Lref$ and the result follows.

\vspace{0.25cm}\noindent\textbf{Associativity implies Pascal's Theorem:} Let $\X$ be a conic and let $\aP,\bP,\cP,\dP,\eP,\fP$ be six points on $\X$. We then have the six lines
\begin{eqnarray*}
\Pp = \overline{\aP\bP} && \PP= \overline{\dP\eP} \\
\Q = \overline{\bP\cP} && \QQ = \overline{\eP\fP} \\
\R = \overline{\cP\dP} && \RR = \overline{\fP\aP}
\end{eqnarray*}
These may be tangent lines to $\X$ if necessary, the only constraints on the points is that opposite lines, such as $\Pp$ and $\PP$, are not identical. Let $\Ppt,\Qpt,\Rpt$ be the intersection points in $\Pp\cap \PP$, $\Q\cap \QQ$, and $\R\cap\RR$ respectively. Let $\Lref$ be the line through $\Ppt$ and $\Qpt$. Pascal's Theorem will follow from proving that $\Rpt$ is on $\Lref$.

We will later show that we can assume that the already listed point $\eP$ is not on $\Lref$. This means that we can setup the marked conic $(\X,\Lref,\eP)$ which, by assumption, induces a group on $\XL$ through our geometric process. These six points will then have addition relations through this group operation as follows: Since $\Pp$, the line through $\aP$ and $\bP$, intersects $\Lref$ at the same point as $\PP$, a line which passes through the identity, we get that $\dP=\aP\oplus \bP$. Similarly, looking at $\Q$ and $\QQ$, we have $\fP=\bP\oplus \cP$. Therefore, from the assumption of associativity, we have
$$
\dP\oplus \cP = (a\oplus b)\oplus c = a\oplus (b\oplus c) = \aP\oplus \fP =:g
$$
Let $\newPt$ be this common point on $\X$. This equality says that $\R=\overline{\cP\dP}$, $\RR=\overline{\fP\aP}$ and $\overline{\eP\newPt}$ all pass through the same point on $\Lref$. This means that $\R$ and $\RR$ intersect on $\Lref$, or $\Rpt$ is on $\Lref$ which proves Pascal's Theorem.

So we just need to show that we can assume that $\eP$ is not on $\Lref$.

The main insight is that for any fixed six point, $a,b,c,d,e,f$, the geometric setup for Pascal's theorem is invariant under cycling this list. So even if $e$ is on the line $L$, we can cycle the list and try again. So we simply need to show that we can cycle such a list until the second-to-last point is not on the line in question.
Note that if any of the $\Ppt,\Qpt,\Rpt$ are equal, then Pascal's Theorem is trivially true, so we can assume they are all distinct. So let $\ALine,\BLine,\CLine$ be the three lines obtained from the points $\Ppt,\Qpt,\Rpt$.
$$
\ALine = \overline{\Ppt\Qpt}\;\;\;\;\;\;\;\;\BLine = \overline{\Qpt\Rpt}\;\;\;\;\;\;\;\;\CLine = \overline{\Rpt\Ppt}
$$
Observe that if we cycle the points $\aP\bP\cP\dP\eP\fP$ to $\bP\cP\dP\eP\fP\aP$, then this operation just cycles the pairs $(\Pp,\PP)$, $(\Q,\QQ)$, $(\R,\RR)$ which cycles through $\Ppt,\Qpt,\Rpt$ which then cycles through the three lines $\ALine,\BLine,\CLine$. We will show that we can cycle through points until the second-to-last element of the initial list list is not on the resulting line $\Lref$. For any second-to-last point on this list to not be on the resulting $\Lref$, which is $\ALine,\BLine,\CLine$ in turn through the cycle, then we just need to show that not all of these statements are simultaneously true:
\begin{eqnarray*}
\eP \in \ALine && \bP\in \ALine\\
\fP \in \BLine && \cP \in \BLine\\
\aP \in \CLine && \dP\in \CLine
\end{eqnarray*}
So assume they're all true. Let's take the first condition. If $\ALine$ does contain $\eP$, then we must have $\ALine=\PP=\QQ$. This is because
$$
\ALine = \overline{\eP\Ppt}=\overline{\eP\dP} = \PP
$$
and similarly for $\QQ$. Cycling the equation $\ALine=\PP$ through all six possibilities, we get:
\begin{eqnarray*}
    \ALine &=& \PP = \Pp  \\
    \BLine &=& \QQ = \Q  \\
    \CLine &=& \RR = \R
\end{eqnarray*}
This is a contradiction of our initial assumptions about these lines needing to be distinct. Therefore, at least one (an, indeed, many) of the above statements must be false, allowing us to cycle through the order of $\aP,\bP,\cP,\dP,\eP,\fP$ until we find an $\eP$ not on our choice of $\Lref$.

\end{proof}

We then have the concise result
\begin{cor}
Pascal's Theorem is equivalent to $(\XL,\oplus)$ being a group for all marked conics $(\X,\Lref,\id)$.
\end{cor}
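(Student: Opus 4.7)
The plan is to observe that this corollary is essentially a packaging of the preceding proposition together with the preceding theorem, so no new geometric input is needed. The proposition already establishes that for every marked conic $(\X,\Lref,\id)$, the binary operation $\oplus$ on $\XL$ is well-defined, commutative, admits $\id$ as a two-sided identity, and has inverses. The only group axiom left unverified is associativity, and the preceding theorem characterizes associativity of $\oplus$ on every marked conic as being logically equivalent to Pascal's Theorem.

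I would structure the argument as a two-line biconditional. First, for the forward direction, I would assume Pascal's Theorem and invoke the theorem to conclude that $\oplus$ is associative on $\XL$ for every marked conic $(\X,\Lref,\id)$; combining this with the proposition gives that $(\XL,\oplus)$ satisfies all axioms of a (in fact abelian) group. For the reverse direction, I would assume $(\XL,\oplus)$ is a group for every marked conic; associativity is then automatic from the group axioms, and applying the theorem in its other direction yields Pascal's Theorem.

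There is essentially no obstacle here beyond explicit bookkeeping: the only thing one must be careful about is that the hypothesis in the corollary (and in the associativity direction of the theorem) is universal over \emph{all} marked conics, not just a single one, because the associativity-implies-Pascal argument constructs a marked conic tailored to the six chosen points. Since both the proposition and the theorem are already phrased with this universal quantifier, the corollary follows immediately by combining the two statements, and no further computation is required.
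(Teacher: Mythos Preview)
Your proposal is correct and matches the paper's approach exactly: the corollary is stated without proof in the paper, being an immediate consequence of combining the proposition (which supplies well-definedness, commutativity, identity, and inverses) with the preceding theorem (which identifies associativity with Pascal's Theorem), and your write-up makes precisely this bookkeeping explicit, including the care about the universal quantifier over all marked conics.
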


\section{Marked Conics induce Groups}
We will then work to show that each $\XL$ is a group without appealing to Pascal's Theorem. The main strategy for this is to reduce each marked conic to a case that we can compute explicitly. The cases will be determined by how $\Lref$ intersects $\X$ - not at all, at a double point, or a two distinct points. These cases will correspond to a ellipse, parabola, and hyperbola respectively. We will then show that the binary operation on each ellipse is isomorphic to rotation, on each parabola is isomorphic to addition, and each hyperbola is isomorphic to multiplication - each of which is a group in their own right. This will then prove Pascal's Theorem through our equivalence.

The key tool is the fact that invertible projective transformations of the projective plane induce homomorphisms of these conic groups. It is important to note that we can talk about homomorphisms of binary operations alone, associativity is not needed. The important thing about projective trasformations is that they send lines to lines, conics to conics and, of course, intersections to intersections. Moreover, if $\Lref_1$ and $\Lref_2$ are any pairs of lines, then there are projective transformations which send $\Lref_1$ to $\Lref_2$. These properties immediately allow us to conclude:
\begin{prop}
Let $(\X_1,\Lref_1,\id_1)$ and $(\X_2,\Lref_2,\id_2)$ be two marked conics with associated binary operations $\oplus_1$ and $\oplus_2$ respectively. If $\T$ is a projective transformation so that $\T(\X_1)=\X_2$, $\T(\Lref_1)=\Lref_2$, and $\T(\id_1)=\id_2$, then for each $\aP,\bP\in (\X_1)_{\Lref_1}$ we have
$$
\T(\aP\oplus_1\bP) = \T(\aP)\oplus_2\T(\bP)
$$
\end{prop}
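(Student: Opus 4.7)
The plan is to unfold the geometric definition of $\oplus_1$ step by step, apply $\T$ to each auxiliary object produced along the way, and show that each of these images coincides with the corresponding object used in the construction of $\T(\aP)\oplus_2 \T(\bP)$. Since a projective transformation maps lines to lines, conics to conics, and preserves incidence (in particular, it maps the intersection of two lines to the intersection of their images, and the intersection of a line with a conic to the intersection of the image line with the image conic), each step of the construction transports cleanly under $\T$.

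Concretely, I would proceed as follows. Given $\aP,\bP\in (\X_1)_{\Lref_1}$, recall that $\aP\oplus_1 \bP$ is built in three stages: form the line $\overline{\aP\bP}$, intersect it with $\Lref_1$ to obtain a point $\pref$, form the line $\overline{\id_1\pref}$, and take its second intersection with $\X_1$. Applying $\T$, the first stage yields $\T(\overline{\aP\bP})=\overline{\T(\aP)\T(\bP)}$ because $\T$ sends lines to lines. The second stage yields $\T(\pref)=\T(\overline{\aP\bP}\cap \Lref_1)=\overline{\T(\aP)\T(\bP)}\cap \T(\Lref_1)=\overline{\T(\aP)\T(\bP)}\cap \Lref_2$, which is exactly the point $\pref'$ used in computing $\T(\aP)\oplus_2\T(\bP)$. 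The third stage gives $\T(\overline{\id_1\pref})=\overline{\T(\id_1)\T(\pref)}=\overline{\id_2\,\pref'}$. Finally, because $\T$ is a bijection of the projective plane carrying $\X_1$ onto $\X_2$, it sends the intersection $\overline{\id_1\pref}\cap \X_1$ (a two-point set containing $\id_1$ and $\aP\oplus_1\bP$) onto $\overline{\id_2\pref'}\cap \X_2$ (the two-point set containing $\id_2$ and $\T(\aP)\oplus_2\T(\bP)$). Since $\T(\id_1)=\id_2$, the remaining image point $\T(\aP\oplus_1\bP)$ must equal $\T(\aP)\oplus_2\T(\bP)$.

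The only subtlety, and the one step worth pausing on, is the degenerate case in which $\aP=\bP$, so that $\overline{\aP\bP}$ is the tangent line to $\X_1$ at $\aP$. Here one needs that $\T$ sends tangent lines of $\X_1$ to tangent lines of $\X_2$ at the image point. This is standard: tangency is characterized by the line meeting the conic at a double point, and a projective transformation, being a bijection carrying $\X_1$ onto $\X_2$ and lines to lines, preserves the multiplicity of intersection. A similar remark handles the case where $\overline{\id_1 \pref}$ is tangent at $\id_1$ (so $\aP\oplus_1 \bP=\id_1$); its image is then tangent to $\X_2$ at $\id_2$, forcing $\T(\aP)\oplus_2 \T(\bP)=\id_2=\T(\id_1)$. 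With these degenerate cases absorbed into the same line-to-line, intersection-to-intersection principle, the argument is complete and the hardest part is really just bookkeeping.
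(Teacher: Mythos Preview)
Your proof is correct and follows exactly the approach the paper has in mind: the paper does not give a separate proof of this proposition at all, but simply states that it follows immediately from the facts that projective transformations send lines to lines, conics to conics, and intersections to intersections. Your argument is a careful unpacking of precisely that observation, with the added virtue of explicitly handling the tangent-line degeneracies that the paper's convention about $\overline{\aP\bP}$ implicitly requires.
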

If we already know that the operation on $(\X_2,\Lref_2,\id_2)$ is associative and therefore induces a group structure on $(\X_2)_{\Lref_2}$, then this tells us that $(\X_1,\Lref_1,\id_1)$ also induces a group structure. With projective transformations, we can use explicit computations to prove that all marked conics induce a group without appealing to Pascal's Theorem. To this end, let $\Linfty$ be the line at infinity and define the following standard marked conics
\begin{enumerate}
    \item The \textbf{Parabola} is the marked conic $(\Par,\Linfty,(0,0))$ where $\Par$ is the parabola $y=x^2$.
    \item The \textbf{Hyperbola} is the marked conic $(\Hyp,\Linfty,(1,1))$ where $\Hyp$ is the hyperbola $xy=1$.
    \item The \textbf{Circle} is the marked conic $(\Circ,\Linfty,(1,0))$ where $\Circ$ is the unit circle $x^2+y^2=1$.
\end{enumerate}
These standard conics will let us categorize every marked conic, as they represent isomorphism classes of conics.

\begin{prop}
If $(\X,\Lref,\id)$ is any marked conic, then there is a projective transformation sending $(\X,\Lref,\id)$ to exactly one of the conics in the lemma. Which one is determined by how $\X$ and $\Lref$ intersect. 
\begin{enumerate}
    \item If $\X$ and $\Lref$ intersect at a double point, then $(\X,\Lref,\id)$ is isomorphic to the parabola. 
    \item If $\X$ and $\Lref$ intersect at two distinct points, then $(\X,\Lref,\id)$ is isomorphic to the hyperbola.
    \item If $\X$ and $\Lref$ do not intersect, then $(\X,\Lref,\id)$ is isomorphic to the circle.
\end{enumerate} 
\end{prop}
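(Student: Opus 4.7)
The plan is to reduce in three stages, matching the three pieces of data in a marked conic. Along the way, the intersection type of $\X \cap \Lref$ emerges as a complete projective invariant: intersection number of a line and a smooth conic (counted with multiplicity) is preserved under projective transformations, and the three standard marked conics realize the three distinct possibilities. Indeed, $\Par$ is tangent to $\Linfty$ at $(0{:}1{:}0)$, $\Hyp$ meets $\Linfty$ transversely at $(1{:}0{:}0)$ and $(0{:}1{:}0)$, and $\Circ$ is disjoint from $\Linfty$ over $\mathbb{R}$. This immediately gives the ``determined by'' and ``exactly one'' parts of the statement; only existence of the equivalence remains.

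For Stage 1, I would standardize the conic. By Sylvester's law of inertia applied to the defining symmetric bilinear form on $\mathbb{R}^3$, every smooth real projective conic corresponds to a form of signature $(2,1)$ up to sign, so $\text{PGL}(3,\mathbb{R})$ acts transitively on smooth real conics. Pick $\T_1$ sending $\X$ to the standard conic $C \in \{\Par,\Hyp,\Circ\}$ whose intersection with $\Linfty$ has the same type as $\X \cap \Lref$; this sends $\Lref$ to some line $L'$ with $|L' \cap C| = |\Lref \cap \X|$.

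For Stage 2, I would find a projective transformation $\T_2$ preserving $C$ that sends $L'$ to $\Linfty$. The strategy is to prescribe the intersection points $L' \cap C \mapsto \Linfty \cap C$ pointwise. In the hyperbola case, I send the two distinct real intersection points to $(1{:}0{:}0)$ and $(0{:}1{:}0)$ and extend uniquely to an automorphism of $\Hyp$. In the parabola case, I send the single tangency point of $L'$ to $(0{:}1{:}0)$; then $\T_2(L')$ is forced to be the tangent line to $\Par$ there, namely $\Linfty$. The circle case has no intersection points to prescribe, so I would instead invoke the classical fact that the stabilizer of $\Circ$ in $\text{PGL}(3,\mathbb{R})$, namely $\text{PO}(2,1)$, acts transitively on lines disjoint from $\Circ$. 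This follows from projective duality, which identifies such lines with the points of the interior disk (the Klein model of the hyperbolic plane), on which $\text{PO}(2,1)$ acts transitively by isometries.

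For Stage 3, with $(\X,\Lref) = (C,\Linfty)$ already in place, I would apply a final transformation preserving both $C$ and $\Linfty$ to move the image of $\id$ to the designated identity. The needed transitivity on the affine part of $C$ is furnished by the standard one-parameter families: rotations for $\Circ$; the parabolic shears $(x,y) \mapsto (x+t,\,y+2tx+t^2)$ for $\Par$; and the scalings $(x,y) \mapsto (\lambda x,\,y/\lambda)$ with $\lambda \neq 0$ for $\Hyp$. Composing the three transformations gives the required equivalence. The main obstacle is the circle subcase of Stage 2, since the absence of real intersection points blocks the direct ``prescribe intersections'' approach used in the other two cases and forces an appeal to the transitivity of $\text{PO}(2,1)$ on the Klein disk.
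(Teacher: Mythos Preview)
Your proof is correct, but it inverts the order of reduction compared with the paper. The paper standardizes the \emph{line} first: a projective transformation sends $\Lref$ to $\Linfty$, after which the affine classification of conics (ellipse/parabola/hyperbola according to how they meet $\Linfty$) gives an affine map to the relevant standard conic, and a final affine symmetry (rotation, Lorentz boost with possible reflection, or parabola-preserving shear) fixes the basepoint. You instead standardize the \emph{conic} first via Sylvester, then must move $L'$ to $\Linfty$ by an element of the stabilizer of $C$, which is where the real work lies. The paper's ordering is more elementary because Stage~2 there invokes only the standard affine classification and never needs to analyze the stabilizer of a conic acting on lines; your circle subcase, by contrast, genuinely requires the $\mathrm{PO}(2,1)$ transitivity on the Klein disk, a heavier (though pretty) fact. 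Your approach has the compensating virtue of being more uniform and structural, making explicit that the obstruction to equivalence is exactly the orbit of $\Lref$ under $\mathrm{Aut}(\X)$. One small slip: in your hyperbola and parabola subcases of Stage~2 the extension to an automorphism of $C$ is not \emph{unique} (the stabilizer is three-dimensional), but existence is all you need and that is clear.
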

\begin{proof}
We can find a projective transformation which sends $\Lref$ to the line at infinity.\mycomment{
If $\Lref$ is the line $Ax+By+Cz=0$, and we set $c=(A,B,C)$ and $v=(x,y,z)$, then the map
$$
(x:y:z) \mapsto (a\cdot v : b\cdot v : c\cdot v)
$$
will send $\Lref$ to $\Linfty$ when the three vectors $a,b,c$ are linearly independent. Note that this is well defined as any scale factor to $(x:y:z)$ will scale $v$, which will just result in a scale in the final expression.} 
So we can then assume that we have a marked conic of the form $(\X,\Linfty,\id)$. How the conic $\X$ intersects $\Linfty$ determines if it is classically a parabola, hyperbola, or an ellipse. It is well known that there is some affine transformation which will send $\X$ to $\Par$, $\Hyp$, or $\Circ$. We just need to ensure that we can get $\id$ to be the point which is prescribed. For the $\Circ$ case, we can just use rotations to translate $\id$ to $(1,0)$ and we can similarly use Lorentz transformations on $\Hyp$ (we may have to mirror it first to ensure that $\id$ is on the correct component). The affine transformations which preserve the parabola have the form
$$
\begin{pmatrix}
x\\
x^2
\end{pmatrix}
\mapsto 
\begin{pmatrix}
a & 0\\
2ma & a^2
\end{pmatrix}\begin{pmatrix}
x\\
x^2
\end{pmatrix}
+ 
\begin{pmatrix}
m\\
m^2
\end{pmatrix}
=\begin{pmatrix}
ax+m\\
(ax+m)^2
\end{pmatrix}
$$
If $\id=(p,p^2)$, then taking $a=1$ and $m=-p$ will send $\id$ to $(0,0)$.
\end{proof}

It then follows that if these three standard conics all induce associative operations, then the same is true for every marked conic and Pascal's Theorem will follow. We  note for marked conics of the form $(\X,\Linfty,\id)$ that the binary operation is computed using the construction in the introduction using parallel lines. This allows for easy, direct computation of addition formulas in this case.
\begin{prop} 
\begin{enumerate}
    \item $(\mathcal{P},L_{\infty},O)$ induces a group on the affine part of $\mathcal{P}$ given by
    $$
    (a,a^2)\oplus (b,b^2) = (a+b,(a+b)^2)
    $$
    \item $(H,L_{\infty},O)$ induces a group on the affine part of $H$ given by
    $$
    (a,a^{-1})\oplus (b,b^{-1}) = (ab,a^{-1}b^{-1})
    $$
    \item  $(S^1,L_{\infty},O)$ induces a group on $S^1$ given by
    $$
    (a,b)\oplus (c,d) = (ac-bd,ad+bc)
    $$
\end{enumerate}
\end{prop}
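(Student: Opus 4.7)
The plan is to execute the construction directly in coordinates for each of the three standard marked conics. Because $L = L_\infty$ in all three cases, the intersection point $\pref = \overline{\aP\bP}\cap L_\infty$ is precisely the direction (slope) of the chord $\overline{\aP\bP}$, and the second line in the construction is simply the line through $\id$ parallel to $\overline{\aP\bP}$. So the entire operation reduces to: compute the slope of the chord through the two summands, write the equation of the parallel line through $\id$, and find the second intersection with the conic.

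For the parabola, the slope of the chord through $(\aP,\aP^2)$ and $(\bP,\bP^2)$ simplifies to $\aP+\bP$. Intersecting $y = (\aP+\bP)x$ with $y = x^2$ gives $x^2 - (\aP+\bP)x = 0$, whose roots are $0$ (the identity) and $\aP+\bP$, yielding the stated formula. For the hyperbola, the chord through $(\aP,1/\aP)$ and $(\bP,1/\bP)$ has slope $-1/(\aP\bP)$, so the parallel line through $(1,1)$ intersected with $xy = 1$ gives a quadratic in $x$ that factors as $(x-1)(x-\aP\bP) = 0$; alternatively, by Vieta's formulas the product of the two roots is $\aP\bP$ and one root is $1$, so the other is $\aP\bP$ as desired.

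For the circle, the cleanest route is through the angle parametrization $(\cos\theta,\sin\theta)$. Writing $(\aP,\bP) = (\cos\alpha,\sin\alpha)$ and $(\cP,\dP) = (\cos\beta,\sin\beta)$, the chord between the two points on the circle is perpendicular to the radius at angle $(\alpha+\beta)/2$. A chord through $(1,0)$ (at angle $0$) parallel to it must therefore connect $(1,0)$ to the point at angle $\alpha+\beta$, so $(\aP,\bP)\oplus(\cP,\dP) = (\cos(\alpha+\beta),\sin(\alpha+\beta))$, and the addition formulas for sine and cosine give $(\aP\cP-\bP\dP,\aP\dP+\bP\cP)$. Alternatively, one can proceed purely algebraically: parametrize the parallel line through $(1,0)$, substitute into $x^2+y^2=1$, and use Vieta's formulas together with the known root $x=1$.

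The main obstacle is almost entirely bookkeeping rather than substance: degenerate cases. When $\aP = \bP$, the chord $\overline{\aP\bP}$ must be interpreted as the tangent line; one verifies that the slope computation extends (e.g.\ $2\aP$ on the parabola, $-1/\aP^2$ on the hyperbola) so the same formulas hold. When the chord is vertical (slope undefined), the parallel through $\id$ is also vertical, and the construction still produces a well-defined second intersection, which must be checked to match the formula after extending by continuity or by working projectively. Once these edge cases are handled, the three computations are elementary and the proposition follows.
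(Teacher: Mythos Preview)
Your computations are correct. The chord-slope calculations for the parabola and hyperbola are straightforward, and the circle argument via the perpendicular-bisector direction at angle $(\alpha+\beta)/2$ is a clean way to see that the parallel chord through $(1,0)$ lands at angle $\alpha+\beta$. Your treatment of the degenerate cases (tangent line when the two points coincide, vertical chords) is the right thing to flag, and in each case the limiting slope agrees with the general formula, so nothing breaks.

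As for comparison with the paper: the paper does not actually prove this proposition in-text; it simply cites an external reference for the computation. Your proposal therefore supplies strictly more than the paper does at this point, carrying out explicitly what the paper outsources. The benefit of your approach is self-containment; the paper's choice keeps the exposition focused on the logical structure (reduction to the three standard conics) rather than on routine coordinate work.
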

\begin{proof}
See \hyperlink{https://www.jstor.org/stable/pdf/40378668.pdf}{ \citep{Author2Year2}}.

\end{proof}

These binary operations are isomorphic to addition, multiplication, and rotation, each of which is a group operation that is associative. This result then allows us to conclude, without using Pascal's Theorem, that:

\begin{theorem}
For any marked conic $(\X,\Lref,\id)$ the binary operation induced on $\XL$ turns it into a group with identity $\id$.
\end{theorem}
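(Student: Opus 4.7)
The plan is to assemble the three preceding propositions of this section into a proof of associativity. Proposition 1 already hands us everything else needed for $(\XL,\oplus)$ to be a group: the operation is well-defined, commutative, has $\id$ as a two-sided identity, and admits inverses. So the only axiom still to be verified is associativity, and I would derive this by transferring it from three explicit model cases.

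The approach I would take proceeds in three steps. First, given an arbitrary marked conic $(\X,\Lref,\id)$, I would invoke the classification proposition to produce a projective transformation $\T$ sending $(\X,\Lref,\id)$ to exactly one of the standard marked conics $(\Par,\Linfty,(0,0))$, $(\Hyp,\Linfty,(1,1))$, or $(\Circ,\Linfty,(1,0))$, the choice being dictated by whether $\X\cap\Lref$ consists of zero, one double, or two distinct points. Second, I would appeal to the homomorphism proposition, which guarantees that $\T$ intertwines $\oplus$ on $\XL$ with the binary operation $\oplus'$ on the image marked conic. Third, I would use the explicit formulas in the last proposition, which identify $\oplus'$ with real addition on $\Par$, real multiplication on $\Hyp$, or the complex/rotation product on $\Circ$, respectively; each of these is known to be associative by elementary arithmetic, entirely independent of Pascal's Theorem.

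Putting these together, for any $\aP,\bP,\cP\in\XL$, setting $\aP'=\T(\aP)$, $\bP'=\T(\bP)$, $\cP'=\T(\cP)$, the homomorphism property yields
$$
\T\bigl((\aP\oplus\bP)\oplus\cP\bigr)=(\aP'\oplus'\bP')\oplus'\cP'=\aP'\oplus'(\bP'\oplus'\cP')=\T\bigl(\aP\oplus(\bP\oplus\cP)\bigr),
$$
and injectivity of $\T$ restricted to $\XL$ closes the argument.

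I do not anticipate any serious obstacle: the hard content (the geometric reduction to three canonical models, and the explicit additive, multiplicative, and rotational formulas) is already packaged in the preceding results, so this theorem is essentially an organizational statement that they collectively supply the missing group axiom from Proposition 1. The only small point requiring a sentence of care is that $\T$ genuinely restricts to a bijection $\XL\to (\T\X)_{\T\Lref}$, which is immediate because $\T$ is a bijection of the projective plane sending the conic to the target conic and the marked line to the target marked line, hence sending their complements to each other.
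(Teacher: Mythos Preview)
Your proposal is correct and follows essentially the same approach as the paper: use the classification proposition to obtain a projective transformation $\T$ to one of the three standard marked conics, invoke the homomorphism proposition so that $\T$ intertwines the operations, and then pull back associativity from the explicit addition, multiplication, or rotation formulas. Your write-up is in fact more explicit than the paper's (which compresses all of this into three sentences), including your remark that $\T$ restricts to a bijection $\XL\to(\T\X)_{\T\Lref}$.
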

\begin{proof}
Let $\T$ be a projective transformation which sends $(\X,\Lref,\id)$ to one of the three standard marked conics. Through the direct computations above, the binary operation induced by these standard marked conics are either addition, multiplication, or rotation which all have group structure. Since $\T$ induces an isomorphism it follows that $(\X,\Lref,\id)$ induces a group structure on $\XL$.
\end{proof}

Pascal's Theorem then follows from the associativity of addition, multiplication, and rotation.

\bibliographystyle{abbrvnat}
\bibliography{main}

\end{document}